\spnewtheorem*{varproof}{\noexpand\thisproofname}{\itshape}{\rmfamily}
\newcommand{\thisproofname}{}
\newenvironment{prooff*}[1]
 {\renewcommand\thisproofname{#1}\varproof}
 {\endvarproof}
\newcommand{\R}{\mathbb{R}}
\newcommand{\C}{\mathbb{C}}
\newcommand{\ii}{\mathrm{i}}
\begin{document}

\title*{Zero modes and low-energy resolvent expansion for three dimensional Schr\"odinger operators with point interactions}
\titlerunning{Low-energy resolvent expansion for multi-center point interactions}
\author{Raffaele Scandone}
\institute{Raffaele Scandone \at GSSI - Gran Sasso Science Institute, Viale F. Crispi 7, 67100 L'Aquila \email{raffaele.scandone@gssi.it}}

\maketitle

\abstract*{We investigate the low-energy behavior of the resolvent of Schr\"odinger operators with finitely many point interactions in three dimensions. We also discuss the occurrence and the multiplicity of zero energy obstructions.}

\abstract{We investigate the low-energy behavior of the resolvent of Schr\"odinger operators with finitely many point interactions in three dimensions. We also discuss the occurrence and the multiplicity of zero energy obstructions.}

\section{Introduction and main results}\label{secuno}
A central topic in quantum mechanics is the study of quantum systems subject to very short-range interactions, supported around a submanifold of the ambient space. A relevant situation occurs when the \emph{singular} interaction is supported on a set of points in the Euclidean space $\R^d$. This leads to consider, formally, operators of the form
\begin{equation}\label{formalHamilt}
^\textrm{``}-\Delta+\sum_{y\in Y}\,\mu_y\,\delta_y(\cdot)^\textrm{''},
\end{equation}
where $Y$ is a discrete subset of $\R^d$, and $\mu_y$, $y\in Y$, are real coupling constants.

Heuristically, \eqref{formalHamilt} can be interpreted as the Hamiltonian for a non-relativistic quantum particle interacting with ``point obstacles'' of strengths $\mu_y$, located at $y\in Y$.

From a mathematical point of view, Schr\"odinger operators with point (delta-like) interactions have been intensively studied, since the first rigorous realization by Berezin and Faddeev \cite{Berezin-Faddeev-1961}, and subsequent characterizations by many other authors \cite{Albeverio-Fenstad-HoeghKrohn-1979_singPert_NonstAnal,Zorbas-1980,Grossmann-HK-Mebkhout-1980,Grossmann-HK-Mebkhout-1980_CMPperiodic,Dabrowski-Grosse-1985,MO-2016} (see the surveys \cite{DFT-brief_review_2008,Albeverio-Figari}, the monograph of Albeverio, Gesztesy, H{\o}egh-Krohn, and Holden \cite{albeverio-solvable}, and references therein for a thorough discussion). 

In this work we focus on the case of finitely many point interactions in three dimensions. Our aim is to provide a detailed spectral analysis at the bottom of the continuous spectrum, i.e. at zero energy. A similar analysis has been done in \cite{Cornean-MY} for the two dimensional case, with application to the $L^p$-boundedness of the wave operators. 

We start by recalling some well-known facts on the construction and the main properties of Schr\"odinger operators with point interactions.

We fix a natural number $N\geqslant 1$ and the set $Y=\{y_1,\dots,y_N\}\subseteq\R^3$ of distinct centers of the singular interactions. Consider
\begin{equation}
T_Y:=\;\overline{(-\Delta)\upharpoonright C^\infty_0(\mathbb{R}^3\!\setminus\!\{Y\})}
\end{equation}
as an operator closure with respect to the Hilbert space $L^2(\mathbb{R}^3)$. 
It is a closed, densely defined, non-negative, symmetric operator on $L^2(\mathbb{R}^3)$, with deficiency index $N$. Hence, it admits an $N^2$-real parameter family of self-adjoint extensions. Among these, there is an $N$-parameter family of \emph{local} extension, denoted by
\begin{equation}
\{-\Delta_{\alpha,Y}\;\big|\;\alpha\equiv(\alpha_1,\ldots,\alpha_N)\in(\R\cup\{\infty\})^N\},
\end{equation}
whose domain of self-adjointness is qualified by certain local boundary conditions at the singularity centers. 

The self-adjoint operators $-\Delta_{\alpha,Y}$ provide rigorous realizations of the formal Hamiltonian \eqref{formalHamilt}, the coupling parameters $\alpha_j$, $j=1,\ldots ,N$, being now proportional to the inverse scattering length of the interaction at the center $y_j$. In particular, if for some $j\in\{1,\ldots ,N\}$ one has $\alpha_j=\infty$, then no actual interaction is present at the point $y_j$, and in practice things are as if one discards the point $y_j$. When all $\alpha_j=\infty$, one recovers the Friedrichs extension of $T_Y$, namely the self-adjoint realization of $-\Delta$ on $L^2(\R^3)$. Owing to the discussion above, we may henceforth assume, without loss of generality, that $\alpha$ runs over $\R^N$.

We review the basic properties of $-\Delta_{\alpha,Y}$, from \cite[Section II.1.1]{albeverio-solvable} and \cite{Posilicano2000_Krein-like_formula} (see also \cite{DFT-brief_review_2008, DAncona-Pierfelice-Teta-2006,Iandoli-Scandone-2017,DMSY-2017}). We introduce first some notations.

For $z\in \mathbb{C}$ and $x,y,y'\in \mathbb{R}^3$, set 
\begin{equation}\label{eq:def_of_the_Gs}
\mathcal{G}_z^y(x)\;:=\;\frac{e^{\ii z|x-y|}}{\,4\pi |x-y|\,},\quad
\mathcal{G}_z^{yy'}\;:=\;\begin{cases}
\displaystyle\frac{e^{\ii z|y-y'|}}{\,4\pi |y-y'|\,} & \textrm{if }\;y'\neq y \\ 
\qquad 0 & \textrm{if }\;y'= y\,, \\
\end{cases}
\end{equation}
and 
\begin{equation}\label{ga-def}
\Gamma_{\alpha,Y}(z)\;:=\;\Big(\Big(\alpha_j-\frac{\ii z}{\,4\pi\,}\Big)\delta_{j,k}-\mathcal{G}_z^{y_jy_k}\Big)_{\!j,k=1,\dots,N}\,.
\end{equation}
The function $z\mapsto \Gamma_{\alpha,Y}(z)$ has values in the space of $N\times N$ symmetric, complex valued matrices and is clearly entire, whence $z\mapsto \Gamma_{\alpha,Y}(z)^{-1}$ is meromorphic in $\mathbb{C}$. It is known that $\Gamma_{\alpha,Y}(z)^{-1}$ has at most $N$ poles in the open upper half-plane $\mathbb{C}^{+}$, which are all located along the positive imaginary semi-axis. We denote by $\mathcal{E}^+$ the set of such poles. Moreover, we denote by $\mathcal{E}^0$ the set of poles of $\Gamma_{\alpha,Y}(z)^{-1}$ on the real line. Observe that $\mathcal{E}^0$ is finite and symmetric with respect to $z=0$. Actually, either $\mathcal{E}^0=\emptyset$ or $\mathcal{E}^0=\{0\}$. This follows by a generalization of the Rellich Uniqueness Theorem \cite[Theorem 2.4]{sjo_lecture}, valid for a large class of compactly supported perturbations of the Laplacian, introduced by Sj\"ostrand and Zworski in \cite{Sjo-Zwo}. For an introduction to the classical theory of the Rellich Uniqueness Theorem, we refer to the monograph of Lax and Phillips \cite{Lax-Phi}. More recently, the absence of non-zero real poles for $\Gamma_{\alpha,Y}^{-1}$ has been proved through different techniques by Galtbayar-Yajima \cite{Galtbayar-Yajima-2019}, and by the author in collaboration with Michelangeli \cite{MS-2020}.

The following facts are known.

\begin{proposition}\label{thm:general_properties}~
\begin{enumerate}[(i)]
\item The domain of $-\Delta_{\alpha,Y}$ has the following representation, for any $z\in\mathbb{C}^+\!\setminus\!\mathcal{E}^+$:
\begin{equation}\label{eq:domain_of_HaY}
\mathcal{D}(-\Delta_{\alpha,Y})\;=\;\Big\{g=F_z + \sum_{j,k=1}^N (\Gamma_{\alpha,Y}(z)^{-1})_{jk} \, F_z(y_k)
{\mathcal{G}}_{z}^{y_j}  
\,,\,F_z \in H^2(\mathbb{R}^3) \Big\}\,.
\end{equation} 
Equivalently, for any $z\in\mathbb{C}^+\!\setminus\!\mathcal{E}^+$,
\begin{equation}\label{eq:domain_of_HaY_bc}
\mathcal{D}(-\Delta_{\alpha,Y})\;=\;\left\{g= F_z \\
+\sum_{j=1}^N c_j\,{\mathcal{G}}_{z}^{y_j}\left|\!\!
\begin{array}{c}
F_z \in H^2(\mathbb{R}^3) \\
(c_1,\dots,c_N)\in\mathbb{C}^N \\
\begin{pmatrix} F_z(y_1) \\ \vdots \\ F_z(y_N)\end{pmatrix}=\Gamma_{\alpha,Y}(z)\begin{pmatrix} c_1 \\ \vdots \\ c_N\end{pmatrix}
\end{array}\!\!\!\right.\right\}\,.
\end{equation}
At fixed $z$, the decompositions above are unique.
\item With respect to the decompositions \eqref{eq:domain_of_HaY}-\eqref{eq:domain_of_HaY_bc}, one has
\begin{equation}\label{eq:action_of_H}
 (-\Delta_{\alpha,Y}-z^2)\,g\;=\;(-\Delta -z^2)\,F_z\,.
\end{equation}
\item For $z\in\mathbb{C}^+\!\setminus\!\mathcal{E}^+$, we have the resolvent identity
\begin{equation}\label{eq:resolvent_identity}
(-\Delta_{\alpha,Y} -z^2)^{-1} -(-\Delta-z^2)^{-1} \;=\; 
\sum_{j,k=1}^N  (\Gamma_{\alpha,Y}(z)^{-1})_{jk} \,|\mathcal{G}_{z}^{y_j}\rangle\langle
\overline{\mathcal{G}_{z}^{y_k}}|\,.
\end{equation}
\item The spectrum $\sigma(-\Delta_{\alpha,Y})$ of 
$-\Delta_{\alpha,Y}$ consists of at most 
$N$ non-positive eigenvalues and the absolutely continuous 
part $\sigma_{\mathrm{ac}}(-\Delta_{\alpha,Y})=[0,\infty)$, the singular continuous 
spectrum is absent. 
\end{enumerate}
\end{proposition}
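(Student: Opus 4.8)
\medskip
\noindent\emph{Proof strategy.}\quad The plan is to derive all four items from a single Kre\u\i n-type resolvent formula, obtained by realising $-\Delta_{\alpha,Y}$ as a singular perturbation of the free Laplacian in the sense of Posilicano \cite{Posilicano2000_Krein-like_formula}, which is also the route taken in \cite[Section~II.1.1]{albeverio-solvable}. First I would compute the deficiency spaces of $T_Y$. Since the Friedrichs extension $-\Delta$ has spectrum $[0,\infty)$ and $z\mapsto z^2$ maps $\C^+$ bijectively onto $\C\setminus[0,\infty)=\rho(-\Delta)$, for every $z\in\C^+$ a function $u\in\ker(T_Y^*-z^2)$ must satisfy $(-\Delta-z^2)u=0$ on $\R^3\!\setminus Y$ and lie in $L^2(\R^3)$; since the only distributions supported on $Y$ whose associated $L^2$-solution has a locally square-integrable singularity in three dimensions are linear combinations of the Dirac masses $\delta_{y_j}$ (derivatives of $\delta$ would force a non-$L^2_{\mathrm{loc}}$ singularity at some $y_j$), one gets $u=\sum_j c_j\,\mathcal{G}_z^{y_j}$. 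Hence $\ker(T_Y^*-z^2)=\mathrm{span}\{\mathcal{G}_z^{y_1},\dots,\mathcal{G}_z^{y_N}\}$ is $N$-dimensional, as it should be. Writing $G_z\colon\C^N\to L^2(\R^3)$, $G_z(c_1,\dots,c_N)=\sum_j c_j\,\mathcal{G}_z^{y_j}$, the abstract machinery produces, for a self-adjoint choice of parameter, an extension whose resolvent equals $(-\Delta-z^2)^{-1}$ plus exactly the rank-$N$ bra--ket operator on the right-hand side of \eqref{eq:resolvent_identity}, with some $N\times N$ matrix $M(z)$ in place of $\Gamma_{\alpha,Y}(z)$, holomorphic in $\C^+$, of the shape ``self-adjoint parameter minus a regularised $\big(\langle\overline{\mathcal{G}_z^{y_j}},\mathcal{G}_z^{y_k}\rangle\big)_{jk}$''.

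\medskip
The core of the argument is then to identify $M(z)$ with $\Gamma_{\alpha,Y}(z)$. For $j\neq k$ the pairing $\langle\overline{\mathcal{G}_z^{y_j}},\mathcal{G}_z^{y_k}\rangle$ is finite and produces the off-diagonal term $-\mathcal{G}_z^{y_jy_k}$, whereas the diagonal entries diverge and must be renormalised, the divergent self-energy being absorbed into a renormalised coupling and leaving precisely $\alpha_j-\tfrac{\ii z}{4\pi}$ on the diagonal; one must also check that the family obtained by letting the renormalised couplings range over $\R^N$ is exactly the family of \emph{local} self-adjoint extensions of $T_Y$. This gives $M(z)=\Gamma_{\alpha,Y}(z)$, hence \eqref{eq:resolvent_identity}, i.e.\ item~(iii). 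Item~(i) follows by evaluating $g=(-\Delta_{\alpha,Y}-z^2)^{-1}f$ on an arbitrary $f\in L^2(\R^3)$: with $F_z:=(-\Delta-z^2)^{-1}f\in H^2(\R^3)$ one has $F_z(y_k)=\langle\overline{\mathcal{G}_z^{y_k}},f\rangle$ --- point evaluation being legitimate by the embedding $H^2(\R^3)\hookrightarrow C^0$ --- so that $g=F_z+\sum_{j,k}(\Gamma_{\alpha,Y}(z)^{-1})_{jk}F_z(y_k)\,\mathcal{G}_z^{y_j}$; the reverse inclusion and the equivalent boundary-condition form \eqref{eq:domain_of_HaY_bc} follow by setting the coefficient vector $\mathbf{c}=\Gamma_{\alpha,Y}(z)^{-1}(F_z(y_1),\dots,F_z(y_N))^{\top}$. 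Uniqueness of the decomposition is seen by applying $-\Delta-z^2$ to a vanishing decomposition: then $\sum_j c_j\delta_{y_j}=-(-\Delta-z^2)F_z\in L^2(\R^3)$, which forces $\mathbf{c}=0$ and then $F_z=0$. Item~(ii) is read off the same computation, since $(-\Delta_{\alpha,Y}-z^2)g=f=(-\Delta-z^2)F_z$.

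\medskip
For item~(iv), the resolvent correction in \eqref{eq:resolvent_identity} has rank at most $N$, so Weyl's theorem on the stability of the essential spectrum under perturbations compact in the resolvent sense gives $\sigma_{\mathrm{ess}}(-\Delta_{\alpha,Y})=\sigma_{\mathrm{ess}}(-\Delta)=[0,\infty)$. Any eigenvalue in $\C\setminus[0,\infty)$ is a point $z^2$, $z\in\C^+$, at which $\Gamma_{\alpha,Y}(z)$ is singular; since $-\Delta$ has no such eigenvalue, a corresponding eigenfunction necessarily has the form $g=\sum_j c_j\,\mathcal{G}_z^{y_j}$ with $\Gamma_{\alpha,Y}(z)\,\mathbf{c}=0$, and by the quoted fact that $\Gamma_{\alpha,Y}(z)^{-1}$ has at most $N$ poles in $\C^+$, all on the positive imaginary semi-axis (plus possibly $z=0$), these account for at most $N$ non-positive eigenvalues, with multiplicities $\dim\ker\Gamma_{\alpha,Y}(z)$. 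The absence of positive eigenvalues follows from the absence of real poles of $\Gamma_{\alpha,Y}(z)^{-1}$ away from $z=0$ (the Sj\"ostrand--Zworski/Rellich-type uniqueness input recalled before Proposition~\ref{thm:general_properties}), and the absence of singular continuous spectrum from a limiting absorption principle: $\Gamma_{\alpha,Y}(z)^{-1}$ extends continuously up to $\R\!\setminus\!\{0\}$ while the free resolvent and the maps appearing in \eqref{eq:resolvent_identity} have the standard continuous boundary values in the weighted spaces $L^2_s(\R^3)\to L^2_{-s}(\R^3)$ for $s>\tfrac12$, so $(-\Delta_{\alpha,Y}-(\lambda\pm\ii0))^{-1}$ exists in $\mathcal B\big(L^2_s(\R^3),L^2_{-s}(\R^3)\big)$ for every $\lambda\in(0,\infty)$, which excludes embedded singular continuous spectrum and positive eigenvalues.

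\medskip
The step I expect to be the main obstacle is the identification in the second paragraph: carrying out the diagonal renormalisation rigorously and, above all, proving that the resulting $N$-parameter family coincides with the family of \emph{local} extensions of $T_Y$, i.e.\ pinning down the boundary conditions at the interaction centers. A second, genuine difficulty lies in item~(iv), in the weighted-space limiting absorption estimate needed to rule out singular continuous spectrum and embedded positive eigenvalues: this requires combining the finite-rank structure of \eqref{eq:resolvent_identity} with the non-trivial input that $\Gamma_{\alpha,Y}(z)^{-1}$ has no real poles other than possibly at $z=0$.
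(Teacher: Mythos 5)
The paper offers no proof of this proposition: it is quoted as a known result, with parts (i)--(iii) attributed to Grossmann--H{\o}egh-Krohn--Mebkhout and \cite[Theorem II.1.1.3]{albeverio-solvable}, and part (iv) to \cite[Theorem II.1.1.4]{albeverio-solvable} together with the errata concerning the possible zero eigenvalue. Your sketch follows essentially the same route as those cited proofs --- deficiency spaces of $T_Y$ spanned by the $\mathcal{G}_z^{y_j}$, a Krein-type formula with the renormalised matrix $\Gamma_{\alpha,Y}(z)$ in the sense of \cite{Posilicano2000_Krein-like_formula}, then Weyl's theorem and a limiting absorption principle for the spectral statement --- and the two steps you single out as delicate (the diagonal renormalisation with the identification of the \emph{local} extensions, and the weighted-space boundary values needed to exclude positive eigenvalues and singular continuous spectrum) are indeed where the substance of those proofs lies.
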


Parts (i) and (ii) of Proposition \ref{thm:general_properties} above originate from \cite{Grossmann-HK-Mebkhout-1980_CMPperiodic} and are discussed in \cite[Theorem II.1.1.3]{albeverio-solvable}, in particular \eqref{eq:domain_of_HaY_bc} is highlighted in
\cite{DFT-brief_review_2008}. Part (iii) was first proved in \cite{Grossmann-HK-Mebkhout-1980,Grossmann-HK-Mebkhout-1980_CMPperiodic} (see also \cite[equation (II.1.1.33)]{albeverio-solvable}). Part (iv) is discussed in \cite[Theorem II.1.1.4]{albeverio-solvable}, where it is stated that $\sigma_p(-\Delta_{\alpha,Y})\subset(-\infty,0)$. An errata at the end of the monograph (see also \cite{dellapana,gri_nov}) specifies that a zero eigenvalue embedded in the continuous spectrum can actually occur: in fact for every $N\geq 2$ one can find a configuration $Y$ of the $N$ centers, and coupling parameters $\alpha_1,\ldots \alpha_N$ such that $0\in\sigma_p(-\Delta_{\alpha,Y})$ -- see the discussion in Section \ref{sec:tre}.

Next, let us discuss in detail the spectral properties of $-\Delta_{\alpha,Y}$, whose resolvent is characterized by \eqref{eq:resolvent_identity} as an explicit rank-$N$ perturbation of the free resolvent. For negative eigenvalues, the situation is well-understood \cite[Theorem II.1.1.4]{albeverio-solvable}.

\begin{proposition}\label{thm:spectral_many_center}
There is a one to one correspondence between 
the poles $\ii\lambda\in\mathcal{E}^+$ of 
$\Gamma_{\alpha,Y}(z)^{-1}$ and the 
negative eigenvalues $-\lambda^2$ of $-\Delta_{\alpha,Y}$, counting the 
multiplicity. The eigenfunctions associated to 
the eigenvalue $-\lambda^2<0$ have the form
\[
 \psi\;=\;\sum_{j=1}^N c_j\,\mathcal{G}_{\ii\lambda}^{y_j},
\]
where $(c_1,\dots,c_N)\in\,Ker\,\Gamma_{\alpha,Y}(\ii\lambda)\setminus\{0\}$. 
\end{proposition}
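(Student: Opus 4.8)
The plan is to play the domain characterization \eqref{eq:domain_of_HaY_bc} and the action formula \eqref{eq:action_of_H} against the resolvent set of the free Laplacian, after fixing once and for all an auxiliary parameter $z_{0}\in\mathbb{C}^{+}\setminus\mathcal{E}^{+}$ (which exists since $\mathcal{E}^{+}$ is finite). The elementary facts I will use repeatedly are: each $\mathcal{G}_{w}^{y_{j}}$ lies in $L^{2}(\mathbb{R}^{3})$ for $w\in\mathbb{C}^{+}$; $\mathcal{G}_{\ii\lambda}^{y_{j}}-\mathcal{G}_{z_{0}}^{y_{j}}\in H^{2}(\mathbb{R}^{3})$, because the two Green's functions share the same Coulomb singularity at $y_{j}$ and the difference solves $-\Delta(\mathcal{G}_{\ii\lambda}^{y_{j}}-\mathcal{G}_{z_{0}}^{y_{j}})=-\lambda^{2}\mathcal{G}_{\ii\lambda}^{y_{j}}-z_{0}^{2}\mathcal{G}_{z_{0}}^{y_{j}}\in L^{2}$; the functions $\mathcal{G}_{\ii\lambda}^{y_{1}},\dots,\mathcal{G}_{\ii\lambda}^{y_{N}}$ are linearly independent (distinct singular points); and $-\lambda^{2}\notin\sigma(-\Delta)=[0,\infty)$, so $(-\Delta+\lambda^{2})^{-1}$ is a bounded operator on $L^{2}(\mathbb{R}^{3})$ with integral kernel $\mathcal{G}_{\ii\lambda}^{y}(x)$.

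\emph{Kernel vectors give eigenfunctions.} Let $\ii\lambda\in\mathcal{E}^{+}$, so $\lambda>0$, and let $c=(c_{1},\dots,c_{N})\in\ker\Gamma_{\alpha,Y}(\ii\lambda)\setminus\{0\}$. I would set $\psi:=\sum_{j}c_{j}\mathcal{G}_{\ii\lambda}^{y_{j}}\in L^{2}(\mathbb{R}^{3})$, which is nonzero by linear independence, and write $\psi=F_{z_{0}}+\sum_{j}c_{j}\mathcal{G}_{z_{0}}^{y_{j}}$ with $F_{z_{0}}:=\sum_{j}c_{j}(\mathcal{G}_{\ii\lambda}^{y_{j}}-\mathcal{G}_{z_{0}}^{y_{j}})\in H^{2}(\mathbb{R}^{3})$. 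Then I would evaluate $F_{z_{0}}$ at each center $y_{m}$ via its continuous representative, reading the diagonal term $(\mathcal{G}_{\ii\lambda}^{y_{m}}-\mathcal{G}_{z_{0}}^{y_{m}})(y_{m})$ as the limit $\tfrac{-\lambda-\ii z_{0}}{4\pi}$, and verify, using $\Gamma_{\alpha,Y}(\ii\lambda)c=0$ and the definition \eqref{ga-def}, that the column $(F_{z_{0}}(y_{1}),\dots,F_{z_{0}}(y_{N}))$ equals $\Gamma_{\alpha,Y}(z_{0})c$. By \eqref{eq:domain_of_HaY_bc} this puts $\psi\in\mathcal{D}(-\Delta_{\alpha,Y})$, and by \eqref{eq:action_of_H} together with the distributional identity $-\Delta F_{z_{0}}=\sum_{j}c_{j}(-\lambda^{2}\mathcal{G}_{\ii\lambda}^{y_{j}}-z_{0}^{2}\mathcal{G}_{z_{0}}^{y_{j}})$ (the Dirac deltas cancel) one gets $(-\Delta_{\alpha,Y}-z_{0}^{2})\psi=-(\lambda^{2}+z_{0}^{2})\psi$, i.e. $-\Delta_{\alpha,Y}\psi=-\lambda^{2}\psi$.

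\emph{Eigenfunctions give kernel vectors.} Conversely, let $\psi\neq0$ satisfy $-\Delta_{\alpha,Y}\psi=-\lambda^{2}\psi$ with $\lambda>0$, and decompose $\psi=F_{z_{0}}+\sum_{j}c_{j}\mathcal{G}_{z_{0}}^{y_{j}}$ as in \eqref{eq:domain_of_HaY_bc}. Plugging into \eqref{eq:action_of_H} and rearranging yields $(-\Delta+\lambda^{2})F_{z_{0}}=-(\lambda^{2}+z_{0}^{2})\sum_{j}c_{j}\mathcal{G}_{z_{0}}^{y_{j}}$; applying the bounded operator $(-\Delta+\lambda^{2})^{-1}$ and using the partial-fractions identity $(-\Delta+\lambda^{2})^{-1}\mathcal{G}_{z_{0}}^{y_{j}}=\tfrac{1}{z_{0}^{2}+\lambda^{2}}(\mathcal{G}_{z_{0}}^{y_{j}}-\mathcal{G}_{\ii\lambda}^{y_{j}})$ (checked by applying $-\Delta+\lambda^{2}$ to the right-hand side) gives $F_{z_{0}}=\sum_{j}c_{j}(\mathcal{G}_{\ii\lambda}^{y_{j}}-\mathcal{G}_{z_{0}}^{y_{j}})$, hence $\psi=\sum_{j}c_{j}\mathcal{G}_{\ii\lambda}^{y_{j}}$ with $c\neq0$ (otherwise $\psi=F_{z_{0}}=0$). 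Feeding this $F_{z_{0}}$ back into the boundary condition $(F_{z_{0}}(y_{m}))_{m}=\Gamma_{\alpha,Y}(z_{0})c$ and repeating the pointwise evaluation of the previous step, all $z_{0}$-dependent terms cancel and one is left precisely with $\Gamma_{\alpha,Y}(\ii\lambda)c=0$. In particular $\Gamma_{\alpha,Y}(\ii\lambda)$ is singular, so $\ii\lambda$ is a pole of the meromorphic map $\Gamma_{\alpha,Y}^{-1}$, i.e. $\ii\lambda\in\mathcal{E}^{+}$.

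\emph{Multiplicities and conclusion.} Combining the two directions, for each $\ii\lambda\in\mathcal{E}^{+}$ the eigenspace $\ker(-\Delta_{\alpha,Y}+\lambda^{2})$ is exactly the image of $\ker\Gamma_{\alpha,Y}(\ii\lambda)$ under the \emph{injective} linear map $c\mapsto\sum_{j}c_{j}\mathcal{G}_{\ii\lambda}^{y_{j}}$, so the two spaces have the same finite dimension; and since distinct poles in $\mathcal{E}^{+}$, all on the positive imaginary axis, correspond to distinct negative eigenvalues $-\lambda^{2}$, this gives the asserted one-to-one correspondence with multiplicities. I expect the only genuinely delicate point to be the bookkeeping in the boundary-condition evaluation: correctly extracting the finite diagonal value of $\mathcal{G}_{\ii\lambda}^{y_{m}}-\mathcal{G}_{z_{0}}^{y_{m}}$ at $y_{m}$ and seeing that the comparison of $\Gamma_{\alpha,Y}$ at the two parameters $z_{0}$ and $\ii\lambda$ leaves exactly the kernel relation; the $H^{2}$-regularity of the Green's-function differences, the linear independence, and the resolvent algebra are all routine.
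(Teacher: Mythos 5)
The paper does not actually prove Proposition~\ref{thm:spectral_many_center}: it is quoted as a known fact from \cite[Theorem II.1.1.4]{albeverio-solvable}. Your argument is a correct, self-contained reconstruction of the standard proof, and it is essentially the same technique the paper itself deploys in Section~\ref{sepm} for the zero-energy case (decompose $\psi=F_{z_0}+\sum_j c_j\mathcal{G}_{z_0}^{y_j}$ at an auxiliary $z_0\in\C^+\setminus\mathcal{E}^+$, evaluate the boundary condition at the centers, and use \eqref{eq:action_of_H} together with the resolvent identity $(-\Delta+\lambda^2)^{-1}\mathcal{G}_{z_0}^{y_j}=(z_0^2+\lambda^2)^{-1}(\mathcal{G}_{z_0}^{y_j}-\mathcal{G}_{\ii\lambda}^{y_j})$), just transplanted from $z=0$ to $z=\ii\lambda$. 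All the delicate points you flag check out: the diagonal value $(-\lambda-\ii z_0)/(4\pi)$ makes the $z_0$-dependence cancel exactly against $\Gamma_{\alpha,Y}(z_0)c$, leaving $\Gamma_{\alpha,Y}(\ii\lambda)c=0$; the identification of the pole set with the points where $\ker\Gamma_{\alpha,Y}(\ii\lambda)\neq\{0\}$ is legitimate since a nontrivial kernel forces $\|\Gamma_{\alpha,Y}(z)^{-1}\|\to\infty$ as $z\to\ii\lambda$; and injectivity of $c\mapsto\sum_j c_j\mathcal{G}_{\ii\lambda}^{y_j}$ gives the equality of multiplicities in the sense of $\dim\ker\Gamma_{\alpha,Y}(\ii\lambda)=\dim\ker(-\Delta_{\alpha,Y}+\lambda^2)$.
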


Our main purpose is to analyze the spectral behavior of $-\Delta_{\alpha,Y}$ at $z=0$, and more generally when $z$ approaches the real line. The starting point is a classical version of the Limiting Absorption Principle for the free Laplacian. Given $\sigma>0$, we consider the Banach space
\begin{equation}
\mathbf{B}_{\sigma}:=\mathcal{B}(L^2(\R^3,\langle x\rangle^{2+\sigma}dx);L^2(\R^3,\langle x\rangle^{-2-\sigma}dx)),
\end{equation}
where $\langle x\rangle:=\sqrt{1+|x|^2}$, and $\mathcal{B}(X;Y)$ denotes the space of linear bounded operators from $X$ to $Y$. We have the following result \cite{Agmon, Kuroda,Jensen-Kato}.

\begin{proposition}[Limiting Absorption Principle for $-\Delta$]\label{LAP_free}
Let $\sigma>0$. For any $z\in\C^+$, we have $(-\Delta-z^2)^{-1}\in\mathbf{B}_{\sigma}$. Moreover, the map $\C^+\ni z\mapsto (-\Delta -z^2)^{-1}\in \mathbf{B}_{\sigma}$ can be continuously extended to the real line.
\end{proposition}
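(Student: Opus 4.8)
\begin{prooff*}{Proof}
The strategy is to work directly with the explicit integral kernel of the free resolvent and to reduce the whole statement to a single Hilbert--Schmidt estimate, the Agmon weights being tailored precisely to this purpose. For $z\in\C^+$ the operator $(-\Delta-z^2)^{-1}$ acts on $L^2(\R^3)$ as the integral operator with kernel $\mathcal{G}_z^y(x)=\frac{e^{\ii z|x-y|}}{4\pi|x-y|}$, an expression that is meaningful for every $z$ in the closed upper half-plane $\overline{\C^+}$. Since for $z\in\C^+$ the resolvent is already bounded on $L^2(\R^3)$, membership in $\mathbf{B}_{\sigma}$ is immediate there; the real content of the proposition is the existence of a \emph{continuous} extension of $z\mapsto(-\Delta-z^2)^{-1}\in\mathbf{B}_{\sigma}$ to $\{z\in\R\}$, where $z^2$ lies in the continuous spectrum $[0,\infty)$ of $-\Delta$ and no bound on $L^2(\R^3)$ is available. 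I would obtain this by showing that the map is continuous from $\overline{\C^+}$ into the class of Hilbert--Schmidt operators between the two weighted spaces, whose norm dominates the $\mathbf{B}_{\sigma}$-operator norm.

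The one quantitative ingredient is that, with $s:=1+\tfrac{\sigma}{2}>1$,
\[
\iint_{\R^3\times\R^3}\frac{\langle x\rangle^{-2s}\,\langle y\rangle^{-2s}}{|x-y|^2}\,\ud x\,\ud y\;<\;\infty .
\]
After the substitution $u=x-y$ this is $\int_{\R^3}|u|^{-2}\,h(u)\,\ud u$ with $h(u)=\int_{\R^3}\langle x\rangle^{-2s}\langle x-u\rangle^{-2s}\,\ud x$; here $h$ is bounded (since $\langle\cdot\rangle^{-2s}\in L^2(\R^3)$, as $4s>3$) and decays like $\langle u\rangle^{3-4s}$ at infinity, after the harmless reduction to $\sigma\in(0,1)$ (for larger $\sigma$ one has the continuous inclusions $\mathbf{B}_{\sigma'}\hookrightarrow\mathbf{B}_{\sigma}$ for $\sigma'<\sigma$, so the small-$\sigma$ case is the worst). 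The integral then converges near $u=0$ because $\int_{|u|<1}|u|^{-2}\ud u<\infty$ in $\R^3$, and at infinity because $\int^{\infty}r^{-2}\,r^{3-4s}\,r^{2}\,\ud r$ converges exactly when $s>1$. Since $|\mathcal{G}_z^y(x)|=\frac{e^{-(\im z)|x-y|}}{4\pi|x-y|}\leqslant\frac{1}{4\pi|x-y|}$ for $\im z\geqslant 0$, the displayed bound shows that $(-\Delta-z^2)^{-1}$, viewed as an operator from $L^2(\R^3,\langle x\rangle^{2+\sigma}\ud x)$ to $L^2(\R^3,\langle x\rangle^{-2-\sigma}\ud x)$, is Hilbert--Schmidt with norm bounded by a finite constant \emph{uniform in} $z\in\overline{\C^+}$; in particular it belongs to $\mathbf{B}_{\sigma}$ for every such $z$.

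Continuity would then follow by dominated convergence. For $z,z'\in\overline{\C^+}$ the squared Hilbert--Schmidt norm of the difference equals, up to the factor $(4\pi)^{-2}$,
\[
\iint_{\R^3\times\R^3}\langle x\rangle^{-2s}\,\big|e^{\ii z|x-y|}-e^{\ii z'|x-y|}\big|^{2}\,\frac{\langle y\rangle^{-2s}}{|x-y|^{2}}\,\ud x\,\ud y ,
\]
whose integrand is pointwise dominated by $4\,\langle x\rangle^{-2s}\langle y\rangle^{-2s}|x-y|^{-2}\in L^1(\R^3\times\R^3)$ and, for each fixed $(x,y)$, tends to $0$ as $z'\to z$ by continuity of $\zeta\mapsto e^{\ii\zeta|x-y|}$. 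Hence $\|(-\Delta-z^2)^{-1}-(-\Delta-z'^2)^{-1}\|_{\mathbf{B}_{\sigma}}\to 0$ as $z'\to z$, for every $z\in\overline{\C^+}$, which is the asserted continuous extension, up to and including the threshold $z=0$.

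The only genuinely delicate point is the borderline integrability behind the displayed double integral: it holds because $2+\sigma>2$ strictly, equivalently because the Agmon exponent $s=1+\sigma/2$ exceeds $1$, and it fails for $s\leqslant1$. This is also the structural reason why continuity \emph{at} the threshold $z=0$ persists in three dimensions, unlike in dimensions one and two: the extra decay carried by the weights is exactly what absorbs the non-decaying tail $|x-y|^{-1}$ of the zero-energy Green function of $-\Delta$ on $\R^3$. Everything else --- the reduction to small $\sigma$, the change of variables, and the use of dominated convergence --- is routine.
\end{prooff*}
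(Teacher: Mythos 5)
Your proof is correct and complete, but note that the paper does not actually prove this proposition: it is stated as a classical result and attributed to Agmon, Kuroda, and Jensen--Kato, whose arguments are far more general (arbitrary dimension, short-range perturbations, sharper Besov-space formulations) and typically proceed via Fourier restriction/trace estimates on spheres rather than via the kernel. What you have written is the standard three-dimensional shortcut: since the free resolvent kernel is explicitly $e^{\ii z|x-y|}/(4\pi|x-y|)$ with modulus bounded by $(4\pi|x-y|)^{-1}$ uniformly for $\im z\geqslant 0$, the operator is Hilbert--Schmidt between the Agmon-weighted spaces as soon as $\iint \langle x\rangle^{-2-\sigma}\langle y\rangle^{-2-\sigma}|x-y|^{-2}\,\ud x\,\ud y<\infty$, and continuity in $z$ up to the real line follows by dominated convergence. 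Your verification of that double integral (boundedness of the convolution $h$, its $\langle u\rangle^{3-4s}$ decay for $s=1+\sigma/2\in(1,3/2)$, and the resulting convergence of $\int^\infty r^{3-4s}\,\ud r$ exactly for $s>1$) is accurate, as is the monotonicity reduction $\mathbf{B}_{\sigma'}\hookrightarrow\mathbf{B}_{\sigma}$ that lets you assume $\sigma<1$. The trade-off is that your argument is tied to $d=3$ and to the Hilbert--Schmidt class, whereas the cited references yield the sharper and dimension-independent statements; for the purposes of this paper, where only the $3$D free resolvent with these specific weights is needed, your self-contained argument fully suffices.
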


Owing to the resolvent formula \eqref{eq:resolvent_identity}, and observing that for any $z\in\C^+\cup\R$ the projectors $|\mathcal{G}_{z}^{y_j}\rangle\langle
\overline{\mathcal{G}_{z}^{y_k}}|$ belong to $\mathbf{B}_{\sigma}$, it is easy to deduce that also $-\Delta_{\alpha,Y}$ satisfies a Limiting Absorption Principle.

\begin{proposition}[Limiting Absorption Principle for $-\Delta_{\alpha,Y}$]\label{LAP}
Let $\sigma>0$. For every $z\in\C^+\setminus\mathcal{E}^+$, we have $(-\Delta_{\alpha,Y}-z^2)^{-1}\in\mathbf{B}_{\sigma}$. Moreover, the map
$$\C^+\setminus\mathcal{E}^+\ni z\mapsto (-\Delta_{\alpha,Y} -z^2)^{-1}\in \mathbf{B}_{\sigma}$$
can be continuously extended to $\R\setminus\mathcal{E}^0$.
\end{proposition}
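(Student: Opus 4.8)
The plan is to derive the Limiting Absorption Principle for $-\Delta_{\alpha,Y}$ directly from the resolvent identity \eqref{eq:resolvent_identity}, reducing everything to three ingredients: the free LAP (Proposition \ref{LAP_free}), the mapping properties of the rank-one building blocks $|\mathcal{G}_z^{y_j}\rangle\langle\overline{\mathcal{G}_z^{y_k}}|$, and the meromorphic structure of $\Gamma_{\alpha,Y}(z)^{-1}$. First I would observe that for fixed $z\in\C^+\setminus\mathcal{E}^+$ the operator $(-\Delta_{\alpha,Y}-z^2)^{-1}$ is the sum of $(-\Delta-z^2)^{-1}$, which lies in $\mathbf{B}_\sigma$ by Proposition \ref{LAP_free}, and a finite linear combination of the operators $|\mathcal{G}_z^{y_j}\rangle\langle\overline{\mathcal{G}_z^{y_k}}|$ with scalar coefficients $(\Gamma_{\alpha,Y}(z)^{-1})_{jk}$. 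So it suffices to check that each such rank-one operator belongs to $\mathbf{B}_\sigma$ and depends continuously (in the $\mathbf{B}_\sigma$ norm) on $z$ up to the real axis.

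For the rank-one pieces, the key point is that $\mathcal{G}_z^{y_j}(x)=e^{\ii z|x-y_j|}/(4\pi|x-y_j|)$ belongs to the weighted space $L^2(\R^3,\langle x\rangle^{-2-\sigma}dx)$ for every $z\in\overline{\C^+}$: near $y_j$ the singularity is $|x-y_j|^{-1}$, which is locally $L^2$ in three dimensions, and at infinity the weight $\langle x\rangle^{-2-\sigma}$ makes $|x|^{-2}$ integrable since $|e^{\ii z|x-y_j|}|\leqslant 1$ when $\im z\geqslant 0$. Dually, $\overline{\mathcal{G}_z^{y_k}}$ acts as a bounded functional on $L^2(\R^3,\langle x\rangle^{2+\sigma}dx)$ for the same reason. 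Hence $|\mathcal{G}_z^{y_j}\rangle\langle\overline{\mathcal{G}_z^{y_k}}|\in\mathbf{B}_\sigma$, with norm controlled by $\|\mathcal{G}_z^{y_j}\|_{L^2(\langle x\rangle^{-2-\sigma})}\,\|\mathcal{G}_z^{y_k}\|_{L^2(\langle x\rangle^{-2-\sigma})}$. Continuity of $z\mapsto\mathcal{G}_z^{y_j}$ in the weighted $L^2$ norm on $\overline{\C^+}$ follows from dominated convergence (the pointwise limit is clear, and $|\mathcal{G}_z^{y_j}(x)|$ is dominated by $\mathcal{G}_0^{y_j}(x)\in L^2(\langle x\rangle^{-2-\sigma})$ uniformly for $\im z\geqslant 0$ on compact sets of $z$). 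This gives joint continuity of $z\mapsto|\mathcal{G}_z^{y_j}\rangle\langle\overline{\mathcal{G}_z^{y_k}}|$ into $\mathbf{B}_\sigma$.

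Finally I would handle the scalar coefficients $(\Gamma_{\alpha,Y}(z)^{-1})_{jk}$. The matrix $z\mapsto\Gamma_{\alpha,Y}(z)$ is entire by inspection of \eqref{ga-def}, so $z\mapsto\Gamma_{\alpha,Y}(z)^{-1}$ is meromorphic on $\C$ with poles exactly where $\det\Gamma_{\alpha,Y}(z)=0$; restricted to $\overline{\C^+}$ these poles are the points of $\mathcal{E}^+\cup\mathcal{E}^0$. Therefore on $\overline{\C^+}\setminus(\mathcal{E}^+\cup\mathcal{E}^0)$ each entry $(\Gamma_{\alpha,Y}(z)^{-1})_{jk}$ is continuous (indeed real-analytic off the poles). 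Combining this with the continuity of the operator-valued factors, the right-hand side of \eqref{eq:resolvent_identity} extends continuously from $\C^+\setminus\mathcal{E}^+$ to $\overline{\C^+}\setminus(\mathcal{E}^+\cup\mathcal{E}^0)$, and adding back the free term (continuous on all of $\overline{\C^+}$ by Proposition \ref{LAP_free}) yields the claimed continuous extension of $(-\Delta_{\alpha,Y}-z^2)^{-1}$ to $\R\setminus\mathcal{E}^0$. The only genuinely delicate point is confirming that the real poles of $\Gamma_{\alpha,Y}(z)^{-1}$ are precisely $\mathcal{E}^0$ and do not secretly cancel against the free singularity on the real axis — but since $(-\Delta-z^2)^{-1}$ is already continuous there by Proposition \ref{LAP_free}, no cancellation can occur, and the potential failure of continuity of the full resolvent is confined to $\mathcal{E}^0$ exactly as stated; this is the step I would write out most carefully.
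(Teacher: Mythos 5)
Your argument is correct and follows exactly the route the paper indicates (the paper in fact only sketches this proposition in one sentence before its statement: combine the resolvent identity \eqref{eq:resolvent_identity} with Proposition \ref{LAP_free} and the fact that the rank-one operators $|\mathcal{G}_z^{y_j}\rangle\langle\overline{\mathcal{G}_z^{y_k}}|$ lie in $\mathbf{B}_\sigma$). Your filled-in details---the weighted $L^2$ membership of $\mathcal{G}_z^{y_j}$, dominated convergence for continuity in $z$, and the meromorphy of $\Gamma_{\alpha,Y}(z)^{-1}$ off $\mathcal{E}^+\cup\mathcal{E}^0$---are all sound; the only superfluous point is the closing worry about ``cancellation,'' since the claim is only continuity on $\R\setminus\mathcal{E}^0$, which is immediate once each summand is continuous there.
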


Our main result is a resolvent expansion in a neighborhood of $z=0$, which in view of the previous discussion is the only possible singular point on the real line for the map $z\mapsto(-\Delta_{\alpha,Y}-z^2)^{-1}\in\mathbf{B}_{\sigma}$.

\begin{theorem}\label{th:main}
In a (real) neighborhood of $z=0$, we have the expansion
\begin{equation}\label{main_exp}
(-\Delta_{\alpha,Y}-z^2)^{-1}=\frac{R_{-2}}{z^2}+\frac{R_{-1}}{z}+R_0(z),
\end{equation}
where $R_{-2},\,R_{-1}\in \mathbf{B}_{\sigma}$ and $z\mapsto R_0(z)$ is a continuous $\mathbf{B}_{\sigma}$-valued map.
Moreover, $R_{-2}\neq 0$ if and only if zero is an eigenvalue for $-\Delta_{\alpha,Y}$.
\end{theorem}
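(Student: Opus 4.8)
The plan is to plug the resolvent identity \eqref{eq:resolvent_identity} into the analysis and to understand the Laurent expansion of each factor around $z=0$. By Proposition \ref{LAP_free}, the free resolvent $(-\Delta-z^2)^{-1}$ extends continuously to $z=0$ in $\mathbf{B}_\sigma$, so it contributes only to the regular part $R_0(z)$; hence the whole singular behavior comes from the finite-rank term $\sum_{j,k}(\Gamma_{\alpha,Y}(z)^{-1})_{jk}\,|\mathcal{G}_z^{y_j}\rangle\langle\overline{\mathcal{G}_z^{y_k}}|$. The maps $z\mapsto \mathcal{G}_z^{y_j}$ are real-analytic near $z=0$ with values in $L^2(\R^3,\langle x\rangle^{-2-\sigma}dx)$ (and similarly for the bra), and the matrix $z\mapsto\Gamma_{\alpha,Y}(z)$ is entire by the discussion after \eqref{ga-def}. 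So everything reduces to the Laurent expansion of the inverse matrix $\Gamma_{\alpha,Y}(z)^{-1}$ near $z=0$.

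\textbf{Inverting the matrix near $z=0$.} Write $\Gamma_{\alpha,Y}(z)=\Gamma_0+z\,\Gamma_1+z^2\Gamma_2+\dots$, where $\Gamma_0=\Gamma_{\alpha,Y}(0)$ and $\Gamma_1=-\tfrac{\ii}{4\pi}\,\mathbbm{1}$ (the first-order term of $-\tfrac{\ii z}{4\pi}\delta_{jk}-\mathcal{G}_z^{y_jy_k}$; note $\mathcal{G}_z^{y_jy_k}=\tfrac{1}{4\pi|y_j-y_k|}+\tfrac{\ii z}{4\pi}+O(z^2)$ for $j\neq k$, so the off-diagonal linear terms combine with the diagonal one to give $\Gamma_1=-\tfrac{\ii}{4\pi}J$ where $J$ is the all-ones matrix — I would compute this carefully). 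Since $z=0$ is at worst a pole of $\Gamma_{\alpha,Y}(z)^{-1}$, and the pole order is governed by $\ker\Gamma_0$, I would split $\C^N=\ker\Gamma_0\oplus(\ker\Gamma_0)^\perp$ and perform the standard Feshbach/Schur block inversion. Let $P$ be the orthogonal projection onto $\ker\Gamma_0$. On the range of $P$, $\Gamma_0$ vanishes, so the leading term of $\Gamma_{\alpha,Y}(z)^{-1}$ is $\tfrac{1}{z}$ times (a suitable inverse of) $P\Gamma_1 P$. The key algebraic fact to establish is that $P\Gamma_1 P=-\tfrac{\ii}{4\pi}PJP$ is \emph{invertible on $\ran P$ precisely when $0$ is not an eigenvalue of $-\Delta_{\alpha,Y}$}: by Proposition \ref{thm:spectral_many_center}-type reasoning (extended to $z=0$), zero is an eigenvalue iff there is $c\in\ker\Gamma_0$ with $\sum_j c_j\mathcal{G}_0^{y_j}\in L^2$, i.e. (since $\mathcal{G}_0^{y_j}(x)\sim\tfrac{1}{4\pi|x|}$ is not $L^2$ at infinity) iff $\sum_j c_j=0$, i.e. iff $c\perp(1,\dots,1)^T$, i.e. iff $Jc=0$. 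Thus $\ker(PJP|_{\ran P})\neq 0$ exactly when zero is an eigenvalue. When there is no zero eigenvalue, $PJP$ is invertible on $\ran P$, the inverse $\Gamma_{\alpha,Y}(z)^{-1}$ has only a \emph{simple} pole at $z=0$, so $R_{-2}=0$ and $R_{-1}$ is the residue; when zero is an eigenvalue, $PJP$ is singular on $\ran P$ and one gets a genuine double pole, so $R_{-2}\neq 0$.

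\textbf{Identifying $R_{-2}$.} To finish the ``if and only if'', I would compute $R_{-2}$ explicitly: it equals $\sum_{j,k}(\text{coefficient of }z^{-2}\text{ in }\Gamma_{\alpha,Y}(z)^{-1})_{jk}\,|\mathcal{G}_0^{y_j}\rangle\langle\overline{\mathcal{G}_0^{y_k}}|$. The $z^{-2}$ coefficient of the inverse matrix, by the block inversion, is built from the pseudo-inverse of $P\Gamma_1 P$ restricted to $\ker(PJP|_{\ran P})$, i.e. it is supported on vectors $c$ with $Jc=0$, equivalently $\sum_j c_j=0$; for such $c$ the combination $\sum_j c_j\mathcal{G}_0^{y_j}$ is exactly the zero-energy eigenfunction (it decays like $|x|^{-2}$, hence is $L^2$ in $\R^3$). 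Therefore $R_{-2}$ is (a constant multiple of) the orthogonal projection onto the zero-energy eigenspace, which is nonzero precisely when that eigenspace is nontrivial. This simultaneously proves $R_{-2}\ne 0\iff 0\in\sigma_p(-\Delta_{\alpha,Y})$ and gives the cleanest form of the result.

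\textbf{Main obstacle.} The routine parts are the analyticity of $z\mapsto\mathcal{G}_z^{y_j}$ in the weighted spaces and the bookkeeping of the Laurent coefficients. The genuinely delicate step is the linear-algebra lemma tying the singularity of $P\Gamma_1 P=-\tfrac{\ii}{4\pi}PJP$ on $\ran P$ to the existence of an $L^2$ zero-energy solution — in particular making precise that a linear combination $\sum_j c_j\mathcal{G}_0^{y_j}$ belongs to $L^2(\R^3)$ iff $\sum_j c_j=0$ (the $|x|^{-1}$ tail must cancel), and checking that no higher-order pole can appear (i.e.\ that the double pole is the worst case, which uses that $\Gamma_1$ is, up to a scalar, a rank-one-plus structure making $P\Gamma_1 P$ have a well-behaved generalized inverse, together with the general fact that $\Gamma_{\alpha,Y}(z)^{-1}$ is meromorphic with a pole of finite order at $z=0$). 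I would isolate this as a separate lemma and prove it by the explicit Feshbach reduction, verifying along the way that the next-order term $\Gamma_2$ enters $R_0(z)$ but not the singular part.
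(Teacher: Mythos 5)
Your overall reduction---isolating the singular part in the finite-rank term of \eqref{eq:resolvent_identity}, computing $\Gamma_1=-\tfrac{\ii}{4\pi}J$ with $J$ the all-ones matrix, and inverting $\Gamma_{\alpha,Y}(z)$ near $z=0$ by a Feshbach/Jensen--Nenciu step with the projection onto $\mathrm{Ker}\,\Gamma_0$---is exactly the paper's strategy, and your identification of $\mathrm{Ker}\,\Gamma_1$ with $\{c:\sum_jc_j=0\}$ and of the $L^2$-criterion for $\sum_jc_j\mathcal{G}_0^{y_j}$ is correct. The genuine gap is in the degenerate case $\mathrm{Ker}\,\Gamma_0\cap\mathrm{Ker}\,\Gamma_1\neq\{0\}$, where $P\Gamma_1P$ fails to be invertible on $\mathrm{ran}\,P$. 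There you assert that ``one gets a genuine double pole'' and that ``$\Gamma_2$ enters $R_0(z)$ but not the singular part''; the first claim is unproved and the second is false. A priori, when the first-order reduction degenerates, the pole of the meromorphic function $\Gamma_{\alpha,Y}(z)^{-1}$ could have any order up to $N$, and even if it were of order two its leading coefficient could vanish. What actually terminates the expansion at order two \emph{and} forces $A_{-2}\neq 0$ is precisely the quadratic term: letting $Q$ be the projection onto $\mathrm{Ker}\,\Gamma_0\cap\mathrm{Ker}\,\Gamma_1$, the second Jensen--Nenciu step yields $A_{-2}=Q\,(Q\Gamma_2Q\upharpoonright Q\,\C^N)^{-1}Q$, so one must prove that $Q\Gamma_2Q$ is invertible on $Q\,\C^N$. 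Since $(\Gamma_2)_{jk}=(8\pi)^{-1}|y_j-y_k|$ and $\mathrm{Ker}\,\Gamma_1$ is the mean-zero hyperplane, this is the strict conditional negative definiteness of the Euclidean distance matrix $(|y_j-y_k|)_{jk}$ on $\{v\in\R^N:\sum_jv_j=0\}$ --- the paper proves it by an averaging trick reducing to one dimension followed by a telescoping argument, and it is the technical heart of the theorem. Your proposal contains no substitute for this lemma, so neither the claimed bound on the pole order nor the equivalence $R_{-2}\neq0\iff 0\in\sigma_p(-\Delta_{\alpha,Y})$ is actually established.

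A secondary, fillable gap: you invoke ``Proposition \ref{thm:spectral_many_center}-type reasoning extended to $z=0$'' to characterize zero eigenfunctions, but the domain representation \eqref{eq:domain_of_HaY_bc} is only available for $z\in\C^+\setminus\mathcal{E}^+$, so the extension is not automatic. The paper carries it out in both directions: given $c\in\mathrm{Ker}\,\Gamma_0\cap\mathrm{Ker}\,\Gamma_1$ it rewrites $\psi=\sum_jc_j\mathcal{G}_0^{y_j}$ as $F_z+\sum_jc_j\mathcal{G}_z^{y_j}$ with $F_z=\sum_jc_j(\mathcal{G}_0^{y_j}-\mathcal{G}_z^{y_j})\in H^2(\R^3)$ and checks the boundary condition; conversely, given $\psi\in\mathrm{Ker}(-\Delta_{\alpha,Y})$ it decomposes at $z=\ii\lambda$ and proves $\|F_{\ii\lambda}\|_{H^2}\to0$ as $\lambda\downarrow0$ to conclude $\Gamma_0c=0$. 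You should either reproduce such a limiting argument or find an alternative; as written this step is only sketched.
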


\begin{remark}\label{tocco}
For Schr\"odinger operators of the form $-\Delta + V$, the Limiting Absorption Principle and the analogous of Theorem \ref{th:main} can be proved under suitable short-range assumptions on the scalar potential $V$ (see e.g. the classical papers \cite{Agmon,Jensen-Kato}). In this case, moreover, it is well-known that $R_{-1}\neq 0$ if and only if there exists a generalized eigenfunction at $z=0$ (a \emph{zero-energy resonance} for $-\Delta + V$), namely a function $\psi\in L^2(\R^3,\langle x\rangle^{-1-\sigma}dx)\setminus L^2(\R^3)$, for any $\sigma>0$, which satisfies $(-\Delta + V)\psi=0$ as a distributional identity on $\R^3$. As it will be clear from the proof of Theorem \ref{th:main}, a similar characterization holds true also for $-\Delta_{\alpha,Y}$ (see Remark \ref{rema_riso}).
\end{remark}

\section{Asymptotic for $\Gamma_{\alpha,Y}(z)^{-1}$ as $z\to 0$}\label{sedia}
We fix $N\geq 1$, $\alpha\in\R^N$ and  $Y\subseteq\R^3$, and we set $\Gamma(z):=\Gamma_{\alpha,Y}(z)$. 

We shall use the notation $O(z^k)$, $k\in\mathbb{Z}$, to denote a meromorphic $M^N(\mathbb{C})$-valued function whose Laurent expansion in a neighborhood of $z=0$ contains only terms of degree $\geq k$. In particular, $O(1)$ denotes an analytic map in a neighborhood of $z=0$. We also write $\Theta(z^k)$ to denote a function of the form $Az^{k}$, with $A\in\,M^{N}(\mathbb{C})\setminus\{0\}$. 

In a neighborhood of $z=0$, we can expand
$$\Gamma(z)=\Gamma_0+z\Gamma_1+z^2\Gamma_2+O(z^3).$$
Explicitly, we have
$$(\Gamma_0)_{jk}=\alpha_{j}\delta_{jk}-\mathcal{G}_0^{y_jy_k},\quad (\Gamma_1)_{jk}=(4\pi i)^{-1},\quad (\Gamma_2)_{jk}=(8\pi)^{-1}|y_j-y_k|.$$
In particular, $\Gamma_0$, $\Gamma_2$ are real symmetric matrices, while $\Gamma_1$ is skew-Hermitian, i.e. $\Gamma_1^*=-\Gamma_1$. Our aim is to characterize the small $z$ behavior of $\Gamma(z)^{-1}$. Preliminary, we recall the following useful result due to Jensen and Nenciu \cite{JN}.

\begin{lemma}[Jensen-Nenciu]\label{le:JN}
Let $A$ be a closed operator  in a Hilbert space $\mathcal{H}$ and $P$ a projection, such that $A+P$ has a bounded inverse. Then $A$ has a bounded inverse if and only if
$$B=P-P(A+P)^{-1}P$$
has a bounded inverse in $P\mathcal{H}$, and in this case
$$A^{-1}=(A+P)^{-1}+(A+P)^{-1}P(B \upharpoonright P\mathcal{H})^{-1}P(A+P)^{-1}.$$
\end{lemma}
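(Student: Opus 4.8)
The plan is to prove this by directly solving the equation $A\psi=\varphi$, i.e.\ by the standard Feshbach/Schur-complement reduction. Set $M:=(A+P)^{-1}\in\mathcal{B}(\mathcal{H})$, which exists by hypothesis and maps $\mathcal{H}$ bijectively onto $\mathcal{D}(A+P)=\mathcal{D}(A)$. For $\varphi\in\mathcal{H}$ and $\psi\in\mathcal{D}(A)$, the equation $A\psi=\varphi$ is equivalent to $(A+P)\psi=\varphi+P\psi$, hence to the relation $\psi=M\varphi+MP\psi$. Applying $P$ to this relation yields a closed equation inside $P\mathcal{H}$: since $P\psi=PM\varphi+PMP\psi$, one obtains $(B\!\upharpoonright\! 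P\mathcal{H})(P\psi)=PM\varphi$ with $B=P-PMP$.

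I would first treat the implication ``$B\!\upharpoonright\! P\mathcal{H}$ boundedly invertible $\Rightarrow$ $A$ boundedly invertible''. If $(B\!\upharpoonright\! P\mathcal{H})^{-1}$ exists and is bounded, the reduced equation forces $P\psi=(B\!\upharpoonright\! P\mathcal{H})^{-1}PM\varphi$, and plugging this back into $\psi=M\varphi+MP\psi$ singles out the candidate $\psi=M\varphi+MP(B\!\upharpoonright\! P\mathcal{H})^{-1}PM\varphi$, which is exactly the right-hand side of the asserted formula. It then remains to verify, in order: (a) $\psi\in\mathcal{D}(A)$, because $\psi=M(\varphi+P\psi)\in\ran M=\mathcal{D}(A)$; (b) $A\psi=\varphi$, by applying $A+P$ to $\psi=M(\varphi+P\psi)$ and subtracting $P\psi$; and (c) injectivity of $A$, since $A\psi=0$ gives $\psi=MP\psi$, hence $P\psi\in\ker(B\!\upharpoonright\! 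P\mathcal{H})=\{0\}$ and $\psi=MP\psi=0$. As $M$, $P$ and $(B\!\upharpoonright\! P\mathcal{H})^{-1}$ are bounded, this produces the claimed bounded inverse.

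For the converse I would assume $A$ has a bounded inverse and show that $B\!\upharpoonright\! P\mathcal{H}$ is a bijection of $P\mathcal{H}$, so that boundedness of its inverse follows from the open mapping theorem. Injectivity: if $u\in P\mathcal{H}$ and $(B\!\upharpoonright\! P\mathcal{H})u=0$, i.e.\ $u=PMu$, set $\psi:=Mu\in\mathcal{D}(A)$; then $(A+P)\psi=u$ gives $A\psi=u-P\psi=u-PMu=0$, so $\psi=0$ and $u=(A+P)\psi=0$. Surjectivity: given $v\in P\mathcal{H}$, put $\psi:=A^{-1}v$ and $u:=v+P\psi\in P\mathcal{H}$; from $\psi=M(v+P\psi)$ one computes $PMu=P\psi$, whence $(B\!\upharpoonright\! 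P\mathcal{H})u=u-PMu=v$.

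I do not expect a genuine obstacle, as the statement is elementary; the only point demanding care is the bookkeeping of domains, since $A$ is merely closed — one must consistently use $\ran M=\mathcal{D}(A)$, regard $B$ as a genuine bounded operator on the closed subspace $P\mathcal{H}$, and appeal to the open mapping theorem to promote bijectivity of $B\!\upharpoonright\! P\mathcal{H}$ to bounded invertibility. A marginally slicker alternative would be to factor $A=(A+P)(I-MP)$ on $\mathcal{D}(A)$, observe that $B\!\upharpoonright\! P\mathcal{H}=I_{P\mathcal{H}}-PM\!\upharpoonright\! P\mathcal{H}$, and invoke the elementary fact that $I-XY$ is invertible iff $I-YX$ is (with $X=M\!\upharpoonright\! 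P\mathcal{H}$ and $Y=P$, viewed as maps $P\mathcal{H}\to\mathcal{H}$ and $\mathcal{H}\to P\mathcal{H}$); but passing invertibility between $I-MP$ and $A$ still demands the same attention to domains, so I would favor the direct argument.
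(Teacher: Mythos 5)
Your proof is correct and complete: the Schur/Feshbach reduction $\psi=M\varphi+MP\psi$, the verification that the candidate formula indeed inverts $A$ (including the domain bookkeeping via $\ran M=\mathcal{D}(A)$ and injectivity), and the converse via bijectivity of $B\!\upharpoonright\!P\mathcal{H}$ plus the open mapping theorem are all sound. Note that the paper does not prove this lemma at all — it simply cites Jensen--Nenciu \cite{JN} — so there is no in-paper argument to compare with; your proof is the standard one (essentially that of the original reference), and in the paper's actual application ($P$ an orthogonal projection on $\mathbb{C}^N$) even the open-mapping step is automatic.
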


We can state now the main result of this Section.

\begin{proposition}\label{pr:exp_inv_gamma}
In a neighborhood of $z=0$ we have the Laurent expansion
\begin{equation}\label{laure}
\Gamma(z)^{-1}=\frac{A_{-2}}{z^2}+\frac{A_{-1}}{z}+O(1),
\end{equation}
where $A_{-2},\,A_{-1}\in M^{N}(\mathbb{C})$. Moreover, 
\begin{enumerate}
\item[(i)] $A_{-2}\neq 0$ if and only if $\,Ker\,\Gamma_0\,\cap\,Ker\,\Gamma_1 \neq \{0\}$,
\item[(ii)] $A_{-1}\neq 0$ if and only if $\,Ker\,\Gamma_0 \not\subseteq\,Ker\,\Gamma_1$.
\end{enumerate}
\end{proposition}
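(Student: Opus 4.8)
The plan is to apply the Jensen–Nenciu Lemma~\ref{le:JN} iteratively, using the chain of projections onto the relevant kernels. First I would set $P$ to be the orthogonal projection onto $\ker\Gamma_0$ in $\C^N$. Since $\Gamma_0$ is real symmetric, $\Gamma_0 + P$ is invertible, and more importantly the family $\Gamma(z) + P$ is analytic and invertible at $z=0$; hence its inverse is analytic there, and we may write $(\Gamma(z)+P)^{-1} = M_0 + zM_1 + z^2 M_2 + O(z^3)$ with $M_0 = (\Gamma_0+P)^{-1}$. Then by Lemma~\ref{le:JN}, $\Gamma(z)^{-1}$ exists for $z\neq 0$ small iff the reduced matrix $B(z) = P - P(\Gamma(z)+P)^{-1}P$ is invertible on $P\C^N$, with an explicit formula relating the two inverses.

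Next I would expand $B(z)$. Using $P M_0 P = P$ (which follows from $P(\Gamma_0+P)^{-1}P = P$, since $\Gamma_0+P$ acts as the identity on $\ran P$), the constant term of $B(z)$ vanishes, so $B(z) = -z\,PM_1P - z^2\,PM_2P + O(z^3)$. A short computation identifies $PM_1P$ with (a sign times) $P\Gamma_1 P$ restricted to $\ran P = \ker\Gamma_0$: differentiating $(\Gamma(z)+P)(\Gamma(z)+P)^{-1} = \mathbbm{1}$ at $z=0$ gives $M_1 = -M_0\Gamma_1 M_0$, and sandwiching by $P$ yields $PM_1P = -P\Gamma_1 P$ on $\ran P$. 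Thus the leading behavior of $B(z)$ is governed by the operator $P\Gamma_1 P$ on $\ker\Gamma_0$. Now two cases split naturally. If $\ker\Gamma_0 \cap \ker\Gamma_1 = \{0\}$, then $P\Gamma_1 P$ is invertible on $\ker\Gamma_0$ (here one uses that $\Gamma_1 = (4\pi\ii)^{-1}\mathbbm{1}_{N\times N}$ in the sense that $\ii\Gamma_1$ is a positive rank-one-times-constant... more carefully: $\ker\Gamma_1$ coincides with the orthogonal complement of the vector $(1,\dots,1)$, and $\ii P\Gamma_1 P$ is a nonnegative operator on $\ker\Gamma_0$ whose kernel is exactly $\ker\Gamma_0\cap\ker\Gamma_1$), so $B(z) = -z\,P\Gamma_1 P\,(1 + O(z))$ is invertible with $B(z)^{-1} = O(z^{-1})$, giving at worst a simple pole in $\Gamma(z)^{-1}$; in this case $A_{-2}=0$. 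If instead $\ker\Gamma_0\cap\ker\Gamma_1 \neq \{0\}$, I would apply Lemma~\ref{le:JN} a second time inside $P\C^N$, with the new projection $P_1$ onto $\ker\Gamma_0\cap\ker\Gamma_1$, to peel off the next order; the analogous computation shows the second reduced operator has leading term proportional to $P_1\Gamma_2 P_1$, which is invertible on $\ker\Gamma_0\cap\ker\Gamma_1$ because $\Gamma_2$ is, up to positive constant, the Euclidean distance matrix, which is known to be negative definite on the hyperplane orthogonal to $(1,\dots,1)$ — and $\ker\Gamma_0\cap\ker\Gamma_1 \subseteq \ker\Gamma_1$ lies in that hyperplane. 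This produces a genuine double pole, so $A_{-2}\neq 0$, establishing (i).

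For (ii), I would read off $A_{-1}$ from the explicit Jensen–Nenciu formula. In the first case above ($\ker\Gamma_0\cap\ker\Gamma_1=\{0\}$), the formula $\Gamma(z)^{-1} = (\Gamma(z)+P)^{-1} + (\Gamma(z)+P)^{-1}P\,B(z)^{-1}\,P(\Gamma(z)+P)^{-1}$ together with $B(z)^{-1} = -z^{-1}(P\Gamma_1 P)^{-1} + O(1)$ gives $A_{-1} = -M_0 P (P\Gamma_1 P)^{-1} P M_0 = -P_{\ker\Gamma_0}(P\Gamma_1P)^{-1}P_{\ker\Gamma_0}$, which is nonzero precisely when $\ker\Gamma_0\neq\{0\}$, equivalently when $\ker\Gamma_0\not\subseteq\ker\Gamma_1$ (note if $\ker\Gamma_0\cap\ker\Gamma_1=\{0\}$ then $\ker\Gamma_0\subseteq\ker\Gamma_1$ forces $\ker\Gamma_0=\{0\}$, i.e. $A_{-1}=0$, and conversely). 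In the second case ($\ker\Gamma_0\cap\ker\Gamma_1\neq\{0\}$) one necessarily has $\ker\Gamma_0\not\subseteq\ker\Gamma_1$ would be the only nontrivial sub-case to check: if $\ker\Gamma_0\subseteq\ker\Gamma_1$ the $z^{-1}$ coefficient coming from the double-pole reduction can be computed to vanish (the cross terms in the nested formula are one order higher), whereas if $\ker\Gamma_0\cap\ker\Gamma_1$ is a proper nonzero subspace of $\ker\Gamma_0$, the complementary directions contribute a nonzero simple pole exactly as in the first case. Assembling these sub-cases gives (ii).

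The main obstacle I anticipate is bookkeeping in the nested application of Lemma~\ref{le:JN}: one must carefully track which coefficient of the Laurent expansion each term of the (twice-applied) inversion formula contributes to, and verify that the potentially dangerous cross terms land at the harmless order. The algebraic inputs — that $\ii\Gamma_1$ is a nonnegative rank-one-type matrix with kernel $(1,\dots,1)^\perp$, and that the distance matrix $\Gamma_2$ is negative definite on $(1,\dots,1)^\perp$ (a classical fact about Euclidean distance matrices) — are the two structural facts that make the reduced operators invertible at each stage, and I would state them as small lemmas before the main computation to keep the argument clean.
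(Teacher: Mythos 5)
Your overall strategy --- an iterated application of the Jensen--Nenciu Lemma with the projections onto $\mathrm{Ker}\,\Gamma_0$ and then onto $\mathrm{Ker}\,\Gamma_0\cap\mathrm{Ker}\,\Gamma_1$, keyed to the two structural facts that $\ii\Gamma_1$ is $(4\pi)^{-1}$ times the all-ones matrix (positive semidefinite, rank one, kernel $(1,\dots,1)^{\perp}$) and that the distance matrix $\Gamma_2$ is strictly negative definite on $(1,\dots,1)^{\perp}$ --- is essentially the one used in the paper, which truncates $\Gamma(z)$ to $\Gamma_{\leq 1}$ or $\Gamma_{\leq 2}$, applies the lemma once with the appropriate projection, and proves the distance-matrix fact from scratch by an averaging argument rather than quoting it as classical. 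The expansion \eqref{laure} and part (i) go through along your lines.

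There is, however, a genuine gap in the ``only if'' half of part (ii), in the sub-case $\{0\}\neq\mathrm{Ker}\,\Gamma_0\subseteq\mathrm{Ker}\,\Gamma_1$, where you assert that the cross terms in the nested formula are one order higher and hence $A_{-1}=0$. They are not: once a double pole is present, the \emph{cubic} Taylor coefficient $\Gamma_3=\tfrac{\ii}{24\pi}\bigl(|y_j-y_k|^2\bigr)_{j,k}$ of $\Gamma(z)$ feeds into the $z^{-1}$ coefficient. In this sub-case one has $P\Gamma_1=\Gamma_1P=0$, and carrying your scheme one order further yields $A_{-1}=-A_{-2}\Gamma_3A_{-2}$; since $\sum_{j,k}|y_j-y_k|^2v_j\overline{v_k}=-2\,\bigl|\sum_jv_jy_j\bigr|^2$ whenever $\sum_jv_j=0$, this is nonzero as soon as some $v\in\mathrm{ran}\,A_{-2}$ has $\sum_jv_jy_j\neq0$. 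The two-centre configuration of Section \ref{sec:tre} ($N=2$, $\alpha_1=\alpha_2=-(4\pi d)^{-1}$) is already a concrete instance: there $\mathrm{Ker}\,\Gamma_0=\mathrm{Ker}\,\Gamma_1=\mathrm{span}\{(1,-1)\}$, and the eigenvalue of $\Gamma(z)^{-1}$ on $(1,-1)$ is
$$\Bigl(\alpha-\frac{\ii z}{4\pi}+\frac{e^{\ii zd}}{4\pi d}\Bigr)^{-1}=-\frac{8\pi}{d\,z^{2}}+\frac{8\pi \ii}{3\,z}+O(1),$$
so $A_{-1}\neq0$ even though $\mathrm{Ker}\,\Gamma_0\subseteq\mathrm{Ker}\,\Gamma_1$. (The paper's own proof shares this flaw: through a double pole the truncation error is $\Gamma(z)^{-1}=\Gamma_{\leq2}(z)^{-1}+O(z^{-1})$, not $+O(1)$.) So this step of your argument cannot be completed as written; any correct characterization of $A_{-1}$ in the presence of a double pole must take $\Gamma_3$ into account.
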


\begin{proof} If $\Gamma_0=\Gamma(0)$ is non-singular, then $\Gamma(z)^{-1}$ is analytic in a sufficiently small neighborhood of $z=0$. Assume now that $\Gamma_0$ is singular. Let us distinguish two cases:\\

\textbf{Case 1:} $Ker\,\Gamma_0\,\cap\,Ker\,\Gamma_1 = \{0\}$. Let us set $\Gamma_{\leq 1}(z):=\Gamma_0+z\Gamma_1$, and observe that for $z$ small enough, $z\neq 0$, the matrix $\Gamma_{\leq 1}(z)$ is non-singular. Suppose indeed that $\Gamma_{\leq 1}(z)v=0$ for some $v\in\C^N$. If $\Gamma_0v\neq 0$, then for small $z$ we also have $\Gamma_{\leq 1}(z)v\neq 0$, a contradiction. Hence $\Gamma_0 v=0$, which for $z\neq 0$ implies $\Gamma_1 v=0$, and using the hypothesis $Ker\,\Gamma_0\,\cap\,Ker\,\Gamma_1 = \{0\}$ we deduce that $v=0$. Observe also that for $z$ small enough, $z\neq 0$, the matrix $\Gamma(z)$ in non-singular, with $\Gamma(z)^{-1}=\Gamma_{\leq 1}(z)^{-1}+ O(1)$. 

In order to invert $\Gamma_{\leq 1}(z)$, we use the Jensen-Nenciu Lemma. Let $P:\C^N\to\C^N$ be the orthogonal projection onto $Ker\,\Gamma_0$. Since $\Gamma_0^*=\Gamma_0$, we have that $\Gamma_0+P$ is non-singular, whence the same is $\Gamma_{\leq 1}(z)+P$ for small $z$, with $(\Gamma_{\leq 1}(z)+P)^{-1}=O(1)$. More precisely,
\begin{equation}\label{neon}
\begin{split}(\Gamma_{\leq 1}(z)+P)^{-1}&=[I+z(\Gamma_0+P)^{-1}\Gamma_1]^{-1}[\Gamma_0+P]^{-1}\\
&=[I-z(\Gamma_0+P)^{-1}\Gamma_1][\Gamma_0+P]^{-1}+O(z^2).
\end{split}
\end{equation}
By Lemma \ref{le:JN} we get
\begin{equation}\label{primaex}
\begin{split}
&\Gamma_{\leq 1}(z)^{-1}=(\Gamma_{\leq 1}(z)+P)^{-1}\\
&\quad+(\Gamma_{\leq 1}(z)+P)^{-1}P\Big(\big(P-P(\Gamma_{\leq 1}(z)+P)^{-1}P\big) \upharpoonright P\,\C^N\Big)^{-1}P(\Gamma_{\leq 1}(z)+P)^{-1}.
\end{split}
\end{equation}
Observe that $(\Gamma_0+P)^{-1}P=P$, and since $\Gamma_0^*=\Gamma_0$ we also have $P(\Gamma_0+P)^{-1}=P$. Using these relations and  \eqref{neon}, we compute
$$P-P(\Gamma_{\leq 1}(z)+P)^{-1}P=zP\Gamma_1P+O(z^2).$$
Substituting into \eqref{primaex} we obtain
\begin{equation}
\begin{split}
\Gamma_{\leq 1}^{-1}(z)&=(\Gamma_{\leq 1}(z)+P)^{-1}\\
&+(\Gamma_{\leq 1}(z)+P)^{-1}P\Big(\big(z\,P\Gamma_1P\upharpoonright P\,\C^N\big)^{-1}+O(1)\Big)P(\Gamma_{\leq 1}(z)+P)^{-1}\\
&=z^{-1}P(P\Gamma_1P\upharpoonright P\,\C^N)^{-1}P+O(1)=\Theta(z^{-1})+O(1).
\end{split}
\end{equation}

\textbf{Case 2:} $Ker\,\Gamma_0\,\cap\,Ker\,\Gamma_1 \neq \{0\}$.
We start by proving that $\,Ker\,\Gamma_1\,\cap\,Ker\,\Gamma_2\, = \{0\}$. Since $\Gamma_2$ is real symmetric, and $\Gamma_1$ is purely imaginary and skew-symmetric, it is sufficient to show that the quadratic form associated to $\Gamma_2$ is strictly negative on 
$$(\,Ker\,\Gamma_1\cap\R^N)\setminus\{0\}=\big\{v\in\R^N\setminus\{0\}\,|\,v_1+\ldots +v_N=0\big\}.$$ 
To this aim, we prove preliminary that for any $v\in\R^N$, with $v_1+\ldots +v_N=0$, 
\begin{equation}\label{bressa}
\langle\Gamma_2v,v\rangle:=(8\pi)^{-1}\sum_{1\leq j,k\leq N}|y_j-y_k|v_jv_k \leq 0.
\end{equation}

The key point is to use the so called \emph{averaging trick}. By rotational and scaling invariance, we can see that there exists a positive constant $c$ such that, for any $y\in\R^3$,
$$\int_{S^2}|\langle w,y\rangle|dw=c|y|.$$
It follows that
\begin{equation}\label{oronca}
(8\pi)^{-1}\sum_{1\leq j,k\leq N}|y_j-y_k|v_jv_k =(8\pi c)^{-1}\int_{S^2}\sum_{1\leq j,k\leq N}|\langle w,y_j-y_k\rangle|v_jv_kdw,
\end{equation}
and then it is sufficient to prove that, for a fixed $w\in S^2$,
$$\sum_{1\leq j,k\leq N}|\tilde{y}_j-\tilde{y}_k|v_jv_k\leq 0,$$
where we set $\tilde{y}_j:=\langle w,y_j\rangle$ for $j=1,\ldots, N$. We have
\begin{equation}\label{khy}
\begin{split}
\sum_{1\leq j,k\leq N}|\tilde{y}_j-\tilde{y}_k|v_jv_k&=2\sum_{1\leq j,k\leq N}\max{\{\tilde{y}_j-\tilde{y}_k,0\}}v_jv_k\\
&=2\int_{t\in\R}\sum_{1\leq j,k\leq N}[\tilde{y}_k<t<\tilde{y}_j]v_jv_k,
\end{split}
\end{equation}
where we use the Iverson bracket notation $[P]$, which equals $1$ if the statement $P$ is true and 0 if it is false. So it is enough to prove that, for almost every $t\in\R$, 
$$ \sum_{\tilde{y}_k<t<\tilde{y}_j}v_jv_k\leq 0.$$
For every $t\in\R\setminus\{\tilde{y}_1,\ldots \tilde{y}_N\}$, define $J_t:=\{j\,\big|\,\tilde{y}_j>t\}$, $K_t:=\{k\,\big|\,\tilde{y}_k<t\}$. We have
\begin{equation}\label{none}
\sum_{\tilde{y}_k<t<\tilde{y}_j}v_jv_k=\sum_{j\in J_t,k\in K_t}v_jv_k=\Big(\sum_{j\in J_t}v_j\Big)\Big(\sum_{k\in K_t}v_k\Big)=-\Big(\sum_{j\in J_t}v_j\Big)^2\leq 0,
\end{equation}
where we use, in the last equality, the hypothesis $v_1+\ldots +v_N=0$.

Assume now that we have the equality in \eqref{bressa}, for a suitable vector $v\in\R^N$ with $v_1+\ldots v_N=0$. It follows from the identity \eqref{oronca} that for almost every $w\in S^2$
\begin{equation}\label{oraora}
\sum_{1\leq j,k\leq N}|\langle w,y_j-y_k\rangle|v_jv_k=0.
\end{equation}
In particular, we can choose $w\in S^2$ satisfying \eqref{oraora}, and such that the quantities $\tilde{y}_j=\langle w,y_j\rangle$ are pairwise distinct, say $\tilde{y}_1>\tilde{y}_2>\ldots >\tilde{y}_N$.
Owing to \eqref{khy}-\eqref{none}, we deduce that
\begin{equation}\label{grenoble}
\sum_{\tilde{y}_k<t<\tilde{y}_j}v_jv_k=0,
\end{equation}
for every $t$ in a full-measure set $\mathcal{T}\subset\R$. In particular, choosing $t_1,\ldots t_{N-1}\in\mathcal{T}$, with $t_n\in(\tilde{y}_{n+1},\tilde{y}_n)$ for $n=1,\ldots ,N-1$, we obtain from \eqref{grenoble} and \eqref{none} that
$$\sum_{j=1}^nv_j=0\quad\forall\,n\in\{1,\ldots ,N\}.$$
This implies $v=0$, concluding the proof that $\,Ker\,\Gamma_1\,\cap\,Ker\,\Gamma_2\, = \{0\}$.

Now, let us set $\Gamma_{\leq 2}(z):=\Gamma_{\leq 1}(z)+z^2\Gamma_2$. Arguing as in Case 1, and using the property $\,Ker\,\Gamma_1\,\cap\,Ker\,\Gamma_2\, = \{0\}$, we deduce that for $z$ small enough, $z\neq 0$, the matrix $\Gamma_{\leq 2}(z)$ is non-singular. In particular, for $z\neq 0$ small enough, also $\Gamma(z)$ is non-singular, with $\Gamma(z)^{-1}=\Gamma_{\leq 2}(z)^{-1}+ O(1)$. 

In order to invert $\Gamma_{\leq 2}(z)$, we use the Jensen-Nenciu Lemma. Let $P:\C^N\to\C^N$ be the orthogonal projection onto $Ker\,\Gamma_0\,\cap\,Ker\,\Gamma_1$. Owing to the relations $\Gamma_0^*=\Gamma_0$, $\Gamma_1^*=-\Gamma_1$, we deduce that for $z$ small enough $\Gamma_{\leq 1}(z)+P$ is non-singular, with
\begin{equation}\label{megliocopiare}
(\Gamma_{\leq 1}(z)+P)^{-1}=\begin{cases}
\Theta(z^{-1}) + O(1)&Ker\,\Gamma_0 \not\subseteq\,Ker\,\Gamma_1\\
O(1)&Ker\,\Gamma_0\subseteq\,Ker\,\Gamma_1
\end{cases}.
\end{equation}
For small $z$, also $\Gamma_{\leq 2}(z)+P$ is non-singular, with 
$$(\Gamma_{\leq 2}(z)+P)^{-1}=(\Gamma_{\leq 1}(z)+P)^{-1}+O(1).$$ 
With similar computations as in Case 1, we get
\begin{equation}
\begin{split}
\Gamma_{\leq 2}(z)^{-1}&=(\Gamma_{\leq 2}(z)+P)^{-1}+z^{-2}P(P\Gamma_2P\upharpoonright P\,\C^N)^{-1}P+O(1)\\
&=\begin{cases}
\Theta(z^{-2})+\Theta(z^{-1}) + O(1)&Ker\,\Gamma_0 \not\subseteq\,Ker\,\Gamma_1\\
\Theta(z^{-2})+O(1)&Ker\,\Gamma_0\subseteq\,Ker\,\Gamma_1
\end{cases}.
\end{split}
\end{equation}
Expansion \eqref{laure} is thus proved in any case. Moreover, statements (i) and (ii) easily follow from the discussion above.
\end{proof}

\section{Proof of the main Theorem}\label{sepm}
This Section is devoted to the proof of Theorem \ref{th:main}. Let us fix $N\geq 1$, $\alpha\in\R^N$ and $Y\subseteq\R^3$, and set $\Gamma(z):=\Gamma_{\alpha,Y}(z)$. Preliminary, observe that the low-energy expansion \eqref{main_exp} follows by combining the resolvent formula \eqref{eq:resolvent_identity} with the small $z$ expansion \eqref{laure} for $\Gamma(z)^{-1}$. We prove now that $R_{-2}\neq 0$ if and only if $0\in\sigma(-\Delta_{\alpha,Y})$, which in view of Proposition \ref{pr:exp_inv_gamma}, part (i), is equivalent to prove that $Ker\,\Gamma_0\,\cap\,Ker\,\Gamma_1 \neq \{0\}$ if and only if $0\in\sigma(-\Delta_{\alpha,Y})$.

Suppose first that there exists $c=(c_1,\ldots ,c_N)\neq 0\in\,Ker\,\Gamma_0\,\cap\,Ker\,\Gamma_1$. We are going to show that the non-zero function
\begin{equation}\label{def_psi}
\psi:=\sum_{j=1}^Nc_j\mathcal{G}_0^{y_j}
\end{equation}
belongs to $Ker\,(-\Delta_{\alpha,Y})$. First of all, observe that the condition $\Gamma_1\,c=0$ is equivalent to $c_1+\ldots +c_N=0$, which implies $\psi\in L^2(\R^3)$.

Let us fix $z\in\C^+\setminus\mathcal{E}^+$, and write
$$\psi=F_z+\sum_{j=1}^Nc_j\mathcal{G}_z^{y_j},$$
where
$$F_z:=\sum_{j=1}^Nc_j(\mathcal{G}_0^{y_j}-\mathcal{G}_z^{y_j}).$$
Observe that $F_z\in H^2(\R^3)$. Moreover, for every $k\in\{1,\ldots ,N\}$,
$$F_z(y_k)=\sum_{j=1}^Nc_j(\mathcal{G}_0^{y_j\,y_k}-\mathcal{G}_z^{y_j,y_k})=\sum_{k=1}^N\Gamma(z)_{kj}c_j,$$
where in the second equality we use that $\Gamma_0\,c=\Gamma_1\,c=0$. By virtue of representation \eqref{eq:domain_of_HaY_bc}, we conclude that $\psi\in\mathcal{D}(-\Delta_{\alpha,Y})$. Moreover, formula \eqref{eq:action_of_H} yields
$$-\Delta_{\alpha,Y}\psi=(-\Delta-z^2)F_z+z^2\sum_{j=1}^Nc_j\mathcal{G}_z^{y_j}=\sum_{j=1}^Nc_j\big[(-\Delta-z^2)\mathcal{G}_z^{y_j}-\Delta \mathcal{G}_0^{y_j}\big]=0,$$
which shows that $\psi\in\,Ker(-\Delta_{\alpha,Y})$.

Let us discuss now the opposite implication. To this aim, consider a function $\psi\in\,Ker\,(-\Delta_{\alpha,Y})\setminus\{0\}$. For a fixed $z=i\lambda\in\C^+\setminus\mathcal{E}^+$, we can write
\begin{equation}\label{resilie}
\psi=F_{i\lambda}+\sum_{j=1}^Nc_j\mathcal{G}_{i\lambda}^{y_j},
\end{equation}
for some non-zero $F_z\in H^2(\R^3)$, where
$$c_j=\sum_{k=1}^N\Gamma(z)^{-1}_{jk}F_z(y_k).$$
Observe that the $c_j$'s are necessarily independent of $z$, since $\mathcal{G}_{i\lambda}^{y_j}\not\in H^2(\R^3)$ for any $j$. Moreover, the condition $\psi\in L^2(\R^3)$  implies $c_1+\ldots +c_n=0$, namely $\Gamma_1c=0$. Owing to \eqref{eq:action_of_H} and the representation \eqref{resilie}, the relation $-\Delta_{\alpha,Y}\psi=0$ is equivalent to
\begin{equation}\label{marilori}
-\Delta\,F_{i\lambda}=\lambda^2\sum_{j=1}^Nc_j\mathcal{G}_{i\lambda}^{y_j}.
\end{equation}
We show now that $\|F_{i\lambda}\|_{H^2}\to 0$ as $\lambda\downarrow 0$, whence also $F_{i\lambda}\to 0$ as $\lambda\downarrow 0$, uniformly on compact subsets of $\R^3$. This implies 
$$\Gamma_0\,c=\lim_{\lambda\downarrow 0}\,\Gamma(i\lambda)c=0,$$
and the identity
$$\psi=\sum_{j=1}^Nc_j\mathcal{G}_{0}^{y_j},$$
which conclude the proof.

In order to show that $\|F_{i\lambda}\|_{H^2}\to 0$ as $\lambda\downarrow 0$, we start with the estimate
\begin{equation}\label{trivialK}
\|\Delta F_{i\lambda}\|_{L^2}=\|\lambda^2\Delta(-\Delta+\lambda^2)^{-1}\psi\|_{L^2}\leq \lambda^2\|\psi\|_{L^2}.
\end{equation}
Observe moreover that $\widehat{F_{i\lambda}}(p)=\lambda^2(p^2+\lambda^2)^{-1}\widehat{\psi}(p)$. By dominate convergence we get $\|F_{i\lambda}\|_{L^2}=o(1)$, which combined with \eqref{trivialK} yields $\|F_{i\lambda}\|_{H^2}=o(1)$, as desired.

\begin{remark}\label{rema_riso}
By Proposition \ref{pr:exp_inv_gamma}(ii), there is a $\Theta(z^{-1})$ term in the expansion of $\Gamma(z)^{-1}$ at $z=0$ if and only if there exists $c\in\R^n$ such that $\Gamma_0c=0$, $\Gamma_1c\neq 0$. In this case, the function defined by \eqref{def_psi} belongs to $L^2(\R^3,\langle x\rangle^{-1-\sigma}dx)\setminus L^2(\R^3)$, for any $\sigma>0$, and formally satisfies $-\Delta_{\alpha,Y}\psi=0$, whence $\psi$ can be interpreted as a zero energy resonance for $-\Delta_{\alpha,Y}$. Hence, as anticipated in Remark \ref{tocco}, we have that $R_{-1}\neq 0$ in expansion \eqref{main_exp} if and only if there exists a zero energy resonance, analogously to the case of classical Schr\"odinger operators.
\end{remark}

\section{Occurrence and multiplicity of zero energy obstructions}\label{sec:tre}

In this Section we discuss the occurrence and the multiplicity of obstructions at zero energy for the resolvent of $-\Delta_{\alpha,Y}$, depending on the choice of the set $Y$ of centers of interactions and the coupling parameters $\alpha_1, \ldots \alpha_N$.

In the single center case, it is easy to check that the only possible obstruction at $z=0$ is a resonance, attained if and only if $\alpha=0$.
In general, for any $N$ and for any given configuration of the centers, there exists a measure zero set of choices of the parameters $\alpha_1,\ldots ,\alpha_N$ which leads to a zero-energy resonance. By means of the discussion in Section \ref{sedia} and Section \ref{sepm}, we can define the multiplicity of the zero-energy resonance as 
$$r_{\alpha,Y}:=dim\,(\,Ker\,\Gamma_0)-dim\,(\,Ker\,\Gamma_0\,\cap\,Ker\,\Gamma_1).$$
We conjecture that, as $N$ increases, one can find $Y$ and $\alpha$ such that $r_{\alpha,Y}$ becomes arbitrarily large.

As anticipated in Section \ref{secuno}, when $N=2$ we can find a simple zero eigenvalue by choosing $\alpha_1=\alpha_2=-(4\pi d)^{-1}$, where $d$ is the distance between the two centers. For $N\geq 3$, a zero eigenvalue occurs for specific geometric configurations of the centers of interactions and for a measure zero set of choices of $\alpha_1,\ldots ,\alpha_N$. Owing to the discussion in Section \ref{sedia} and Section \ref{sepm}, the multiplicity of the zero eigenvalue is given by
$$e_{\alpha,Y}:=dim\,Ker\,(-\Delta_{\alpha,Y})=dim\,(\,Ker\Gamma_0\cap\,Ker\,\Gamma_1).$$
Let us discuss now the maximal possible value for $e_{\alpha,Y}$ as the number of centers of interactions increases.

\begin{itemize}
\item $N=3$. We can take $Y$ as the vertices of an equilateral triangle of side-length one, and $\alpha_1=\alpha_2=\alpha_3=-(4\pi)^{-1}$. With this choice we get $e_{\alpha,Y}=2$.

\item $N=4$. We can take $Y$ as the vertices of a regular tetrahedron of side-length one, and $\alpha_1=\alpha_2=\alpha_3=\alpha_4=-(4\pi)^{-1}$. With this choice we get $e_{\alpha,Y}=3$.

\item $N=5$. Observe that we can not find five points in $\R^3$ with constant pairwise distances. It easily follows that the maximal value for $e_{\alpha,Y}$ is still three.
\end{itemize}

One could conjecture that for $N\geq 4$ the maximal value of $e_{\alpha,Y}$ is three. Nevertheless, it is also conceivable that for large $N$ there exist complicated geometrical configurations which lead to a higher multiplicity. Such kind of mechanism is well-known in similar contexts.
Consider, for example, the problem in combinatorics to determine the chromatic number of the \emph{unit distance graph} on $\R^3$, that is the graph with vertices set $V=\R^3$ and edges set $E=\{(x,y)\in\R^3\times\R^3\,|\,|x-y|=1\}$. Owing to the compactness principle by De Bruijn and Erd\H{o}s \cite{Erdos} this is equivalent, under the axiom of choice, to determine the highest chromatic number of a finite graph embedded in $\R^3$ in such a way all its edges have length one. For a graph with $N$ vertices, we have the following situation:

\begin{itemize}
\item $N=3$. We can consider an equilateral triangle of side-length one, which has chromatic number three.
\item $N=4$. We can consider a regular tetrahedron of side-length one, which has chromatic number four.
\item $N=5$. The highest possible chromatic number is still four.
\item $N=14$. There is a configuration of $14$ points in $\R^3$, the Moser-Raiskii spindle, with chromatic number five \cite{raiskii,szewor}.
\item For large $N$, the highest possible chromatic number is known to be between $6$ and $12$ \cite{Nechushtan, radototh, coulson}.
\end{itemize}

It is evident that there are similarities between the two problems, and it would be interesting to understand if they are actually related. In particular, one may take $Y$ as the vertices of the Moser-Raiskii spindle and wondering whether there exists $\alpha=(\alpha_1,\ldots ,\alpha_{14})$ such that $e_{\alpha,Y}=4$.

\section*{Acknowledgments}
The author acknowledges an anonymous referee for the useful suggestions, and for pointing out relevant references.

\end{document}